\newtheorem{thm}{Theorem}
\newtheorem{lem}[thm]{Lemma}
\theoremstyle{definition}
\theoremstyle{remark}
\newtheorem{ex}[thm]{Example}
\newtheorem{rem}[thm]{Remark}
\newtheorem{Qu}[thm]{Question}
\numberwithin{equation}{subsection}
\newcommand{\Z}{\mathbb{Z}}
\newcommand{\C}{\mathbb{C}}
\begin{document}

\title{The Haar State on $SU_q(N)$}
\author{Clark Alexander}

\email{gcalex@temple.edu} \maketitle \tableofcontents

\section{Introduction}
\indent When attacking the problem of generalizing index theorems
from low dimensional objects to higher dimensional analogous
objects, one encounters many different generalization methods.  Sadly,
many of these generalizations lead one down a fruitless path.  The
following sections are the original work of the author in all cases
where $N>2.$  It is the author's hope that these ''correct"
generalizations shed some light on how to compute with higher
dimensional compact matrix quantum groups.  The second section shows
that the decomposition from classical representation theory yields a
useful result in the quantum case.  The third section uses in a
specific way the decomposition of $SU_q(2)$ into direct summands
indexed by pairs of integers to compute the Haar state in the
case $N=2.$  Section four shows that the techniques exploited in
section three do not generalize to the higher dimensional cases
without some arduous re-indexing.  However, particular techniques
from section three will come together in a rather fascinating way to
give a Haar state on $SU_q(N)$ that is strikingly similar to
that on $SU_q(2)$ when the proper re-indexing has occured.  A corollary to the result obtained here is that $SU_q(N)$ is given an orthonormal basis as a vector space in the form
\begin{equation}
|\ell m n\rangle = \frac{1}{\sqrt{h(t_{ij}^{\ell*}t_{ij}^{\ell})}}t_{ij}^{\ell}.
\end{equation}

This work is intended in part as part two of [A].

\section{Peter-Weyl Type Decomposition of $SL_q(N)$}
\indent  The Peter-Weyl type decomposition of $SL_q(N)$ is nearly
identical to the classical case of $SL(N,\C).$  For $N\in
\mathbb{N}$ consider the algebras $$\mathcal{O}(K_N):=
\C[z_1,\dots,z_N]/\langle z_1\cdots z_N =1\rangle.$$  These are the
function algebras of the maximal tori of $SL(N,\C).$  In the case
$N=2$ we have exactly the Laurent polynomials in one variable.  One
may, however, put the structure of a Hopf algebra on
$\mathcal{O}(K_N)$ by setting $\Delta(z_i)=z_j\otimes z_j,$
$\epsilon(z_j)=1$, $S(z_j)= z_j^{-1}.$\\
\begin{rem}
Since $\prod z_j =1 $ it is only necessary to use $N-1$ such $z_j$
as $$z_N=z_1^{-1}\cdots z_{N-1}^{-1}.$$
\end{rem}
\indent Now consider the homomorphisms
$$\phi : SL_q(N) \rightarrow \mathcal{O}(K_N) $$
given by
\begin{equation}
\phi(u_{i,j}) = \delta_{ij}z_j
\end{equation}

These homomorphisms are in fact Hopf algebra homomorphisms.  Now
consider the homomorphisms
\begin{eqnarray*}L_K: SL_q(N) \rightarrow
\mathcal{O}(K_N)\otimes SL_q(N),\\
R_K: SL_q(N) \rightarrow SL_q(N)\otimes \mathcal{O}(K_N).
\end{eqnarray*}

Given by
\begin{eqnarray*}
L_K = (\phi \otimes id)\circ \Delta\\
R_K = (id\otimes \phi)\circ \Delta,
\end{eqnarray*}

and define the sets
\begin{equation}\mathcal{A}\lbrack \alpha,\beta
\rbrack := \{ x\in SL_q(N) | L_K(x) = z^{\alpha}\otimes x , R_K(x) =
x\otimes z^{\beta}\}.
\end{equation}
Here, $\alpha$ and $\beta$ are multi-indices and $z^{\alpha} =
\prod_{i=1}^{N-1} z_i^{\alpha_i}.$

These sets $\mathcal{A}[\alpha,\beta]$ shall be known as the
$\alpha$-left, $\beta$-right invariant sets of
$\mathcal{O}(SL_q(N)).$  Elements of $\mathcal{A}[0,0]$ shall be
known as $\emph{K bi-invariant}$ elements.\\
\indent The present goal is to show that these sets form a
decomposition of $SL_q(N)$ and that the only elements which garner
nontrivial Haar measure are those belonging to
$\mathcal{A}[0,0].$\\
\indent Presently, only $SL_q(2)$ shall receive attention.  Once
this decomposition is established for $N=2$ the proper
generalizations are easy to make.  In the case of $SL_q(2)$ one sees
that the sets $\mathcal{A}[m,n]$ are indexed by pairs of integers.
Indeed, the homomorphism $\phi$ acts by
\begin{eqnarray*}
u_{11}\mapsto z,& u_{12}\mapsto 0,\\
u_{21}\mapsto 0, & u_{22}\mapsto z^{-1}.
\end{eqnarray*}

In order to check that these sets yield a decomposition one needs to
check two things:
\begin{enumerate}
\item All the generators fall into a single set.\\
\item Multiplication of elements falls into a set i.e. if
$x\in\mathcal{A}[m,n]$ and $y\in\mathcal{A}[r,s]$ then
$xy\in\mathcal{A}[p,q]$ for some $p,q.$
\end{enumerate}

\begin{rem}
For the rest of the exposition of $SL_q(2)$ we write, as is common
\begin{eqnarray*}
u_{11}=a,&u_{12}=b,\\
u_{21}=c, & u_{22}=d.
\end{eqnarray*}
\end{rem}

\begin{lem}
In the case of $SL_q(2)$ one has
\begin{enumerate}
\item[(a)] the generators $a,b,c,d$ belong to distinct sets, and\\
\item[(b)] $\mathcal{A}[m,n] \cdot \mathcal{A}[r,s] \subset
\mathcal{A}[m+r,n+s].$
\end{enumerate}
\end{lem}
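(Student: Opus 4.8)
The plan is to verify both parts by direct computation on the generators, the only non-computational input being that $L_K$ and $R_K$ are algebra homomorphisms. Recall that on $SL_q(2)$ the coproduct is the matrix coproduct $\Delta(u_{ij}) = \sum_{k} u_{ik}\otimes u_{kj}$, that is,
\begin{gather*}
\Delta(a) = a\otimes a + b\otimes c,\qquad \Delta(b) = a\otimes b + b\otimes d,\\
\Delta(c) = c\otimes a + d\otimes c,\qquad \Delta(d) = c\otimes b + d\otimes d.
\end{gather*}

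For (a), I would apply $\phi\otimes\mathrm{id}$ and $\mathrm{id}\otimes\phi$ to these four expressions. Since $\phi(a) = z$, $\phi(d) = z^{-1}$ and $\phi(b) = \phi(c) = 0$, the ``off-diagonal'' tensor legs are annihilated and one reads off
\[
L_K(a) = z\otimes a,\quad L_K(b) = z\otimes b,\quad L_K(c) = z^{-1}\otimes c,\quad L_K(d) = z^{-1}\otimes d,
\]
\[
R_K(a) = a\otimes z,\quad R_K(b) = b\otimes z^{-1},\quad R_K(c) = c\otimes z,\quad R_K(d) = d\otimes z^{-1}.
\]
Hence $a\in\mathcal{A}[1,1]$, $b\in\mathcal{A}[1,-1]$, $c\in\mathcal{A}[-1,1]$, $d\in\mathcal{A}[-1,-1]$. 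To conclude that these are genuinely \emph{distinct} sets it suffices to note that the monomials $z^{\alpha}$ are linearly independent in $\mathcal{O}(K_2)$, so a nonzero $x$ can satisfy $L_K(x) = z^{\alpha}\otimes x$ for at most one $\alpha$ (and similarly on the right); as the four index pairs above are pairwise different, the four generators fall into four different sets $\mathcal{A}[\cdot,\cdot]$.

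For (b), I would first record that $L_K$ and $R_K$ are algebra homomorphisms: $\Delta$ is one since $SL_q(2)$ is a Hopf algebra, $\phi$ is one (indeed a Hopf algebra map, as noted just before the lemma), and $\phi\otimes\mathrm{id}$, $\mathrm{id}\otimes\phi$ are algebra maps on the relevant tensor products, so the composites $L_K = (\phi\otimes\mathrm{id})\circ\Delta$ and $R_K = (\mathrm{id}\otimes\phi)\circ\Delta$ are algebra maps. Then for $x\in\mathcal{A}[m,n]$ and $y\in\mathcal{A}[r,s]$ one computes
\[
L_K(xy) = L_K(x)L_K(y) = (z^{m}\otimes x)(z^{r}\otimes y) = z^{m+r}\otimes xy,
\]
and symmetrically $R_K(xy) = (x\otimes z^{n})(y\otimes z^{s}) = xy\otimes z^{n+s}$, using that multiplication in the tensor-product algebra is componentwise and $z^{m}z^{r} = z^{m+r}$ in $\mathcal{O}(K_2)$. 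Therefore $xy\in\mathcal{A}[m+r,n+s]$, which is (b).

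I do not expect a serious obstacle. The only points needing a little care are the bookkeeping in the first step — using the vanishing of $\phi(b),\phi(c)$ on the correct tensor factor for $L_K$ versus $R_K$ — and, in (a), actually invoking the linear independence of the $z^{\alpha}$ rather than merely exhibiting the four index pairs. The conceptual content is just that $\phi$ is diagonal, which converts the coproduct into a $\mathbb{Z}^{2}$-grading of $SL_q(2)$.
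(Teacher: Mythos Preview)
Your argument is correct and proceeds exactly as the paper's does: direct computation on the coproduct for (a), then the algebra-homomorphism property of $L_K$ and $R_K$ for (b). Note that your assignments $b\in\mathcal{A}[1,-1]$, $c\in\mathcal{A}[-1,1]$ are in fact the correct ones --- the paper's proof has these two interchanged --- and your extra sentence invoking linear independence of the $z^{\alpha}$ to justify ``distinct'' is a small clarification the paper omits.
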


\begin{proof}
Part $(a)$ is shown by direct computation.  Only $a$ will be shown
here, the rest are done in precisely the same manner.
\begin{eqnarray*}
L_K(a) = &(\phi\otimes id)\Delta(a)\\ = &(\phi\otimes id)(a\otimes a
+ b\otimes c)\\ = & \phi(a)\otimes a + \phi(b)\otimes c \\
= & z\otimes a\\
&\\
R_K(a) = & a\otimes \phi(a) + b\otimes\phi(c)\\
= & a\otimes z
\end{eqnarray*}
Hence $a\in\mathcal{A}[1,1].$ Likewise
$b\in\mathcal{A}[-1,1],c\in\mathcal{A}[1,-1],d\in\mathcal{A}[-1,-1].$\\
As for $(b)$ one needs to utilize the fact that $\phi, L_K,$ and
$R_K$ are homomorphisms.\\
Let $x\in\mathcal{A}[m,n], y\in\mathcal{A}[r,s]$ then
\begin{eqnarray}
L_K(xy) = L_K(x)L_K(y)  = (z^m\otimes x)(z^r\otimes y) =
z^{m+r}\otimes xy\nonumber\\
R_K(xy) = R_K(x)R_K(y)  = (x\otimes z^n)(y\otimes z^s) = xy\otimes
z^{n+s}
\end{eqnarray}
\end{proof}

Therefore, one may now write
\begin{equation}
\mathcal{O}(SL_q(2)) = \bigoplus_{m,n\in\Z}\mathcal{A}[m,n].
\end{equation}

When one attempts to replicate the proof for higher dimensions,
there are few, if any, stopping blocks.  In fact, the generators
$u_{i,j}$ for $SL_q(N)$ are prescribed to $\mathcal{A}[\alpha,\beta]$ in the same way.  Using the
coproduct when $N>2$ is marginally more tedious, but the
homomorphisms kill off more elements than before.  Furthermore, when
checking the second condition, the only thing left to worry about is
how to deal with multi-indices.  This, however, gives no trouble in
the actual computation.  Therefore, one may also write
\begin{equation}
\mathcal{O}(SL_q(N)) = \bigoplus_{\alpha,\beta\in \Z^{N-1}}\mathcal{A}[\alpha,\beta].
\end{equation}

What has happened is that the map $\phi$ sends $SL_q(N)$ into the
coordinate algebra of the maximal torus of $SL(N,\C)$ in direct
analogy with the classical Peter-Weyl decomposition theorem.

\begin{rem}
Depending on the presentation of information shown to the reader,
the generalization from $N=2$ to $N>2$ should be easy.  However,
there is one beautiful anomaly that occurs in the case $N=2.$ Namely
one can show for every $\ell$ that
\begin{equation}t_{i,j}^{\ell} \in
\mathcal{A}[-2i,-2j]
\end{equation}
where the $t_{i,j}^{\ell}$ are the matrix corepresentations from
before (cf. [A]).  This is only possible because the indices of the
decomposition are integers and not elements in an integer lattice.
This particular piece of information is propitious when computing
the Haar state on $SU_q(2).$
\end{rem}

\section{The Haar State on $SU_q(2)$}
\indent Woronowicz graced the mathematical world with a proof that
there exists a unique bi-invariant linear functional satisfying
\begin{equation}
h(x)\cdot I = (id\otimes h)\Delta(x) = (h\otimes id)\Delta(x); h(1)=1.
\end{equation}

In the case of $SL_q(2)$ one can easily determine
$h(t_{i,j}^{\ell}) = 0$ when $\ell>0.$  One might wonder if there
are any nontrivial nonvanishing elements under $h.$  Indeed, there
are, but one needs to be clever to find them.

\begin{lem}
The only nonvanishing elements under $h$ are the K bi-invariant
elements.
\end{lem}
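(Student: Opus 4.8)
The plan is to combine the defining bi-invariance of the Haar state with the two $\mathcal{O}(K_2)$-coactions $L_K$, $R_K$ and the direct-sum decomposition $\mathcal{O}(SL_q(2)) = \bigoplus_{m,n\in\Z}\mathcal{A}[m,n]$ established above. The point is that on a homogeneous component $\mathcal{A}[m,n]$ the coactions multiply the Haar value by a nontrivial Laurent monomial unless $(m,n)=(0,0)$, which forces that value to vanish.

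First I would record the intertwining identities: for every $x\in SL_q(2)$,
\[
(id\otimes h)\,L_K(x) = h(x)\cdot 1, \qquad (h\otimes id)\,R_K(x) = h(x)\cdot 1 .
\]
To see the first, write $L_K=(\phi\otimes id)\Delta$, move $h$ through the right leg to get $(id\otimes h)L_K(x) = (\phi\otimes h)\Delta(x) = \phi\big((id\otimes h)\Delta(x)\big) = \phi(h(x)1) = h(x)1$, using only linearity of $\phi$ and $\phi(1)=1$ together with Woronowicz's property $(id\otimes h)\Delta(x)=h(x)1$; the second identity is symmetric, using $(h\otimes id)\Delta(x)=h(x)1$.

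Next I would apply these to a homogeneous element. If $x\in\mathcal{A}[m,n]$ then by definition $L_K(x)=z^m\otimes x$ and $R_K(x)=x\otimes z^n$, so the identities above read $z^m\,h(x) = h(x)\cdot 1$ and $z^n\,h(x)=h(x)\cdot 1$ inside $\mathcal{O}(K_2)=\C[z,z^{-1}]$. Since the Laurent monomials $\{z^k:k\in\Z\}$ are linearly independent, $z^m-1\neq 0$ whenever $m\neq 0$; hence $h(x)=0$ unless $m=0$, and likewise unless $n=0$. Thus $h$ annihilates $\mathcal{A}[m,n]$ for every $(m,n)\neq(0,0)$. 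Finally, for an arbitrary $x\in\mathcal{O}(SL_q(2))$ I would write $x=\sum_{m,n}x_{m,n}$ with $x_{m,n}\in\mathcal{A}[m,n]$ a finite sum (the decomposition above), so by linearity $h(x)=\sum_{m,n}h(x_{m,n})=h(x_{0,0})$; in particular $h(x)\neq 0$ forces $x_{0,0}\neq 0$, i.e. $x$ has a nonzero $K$ bi-invariant component.

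The hard part here is essentially bookkeeping rather than a genuine difficulty: one must be careful to apply $id\otimes h$ and $h\otimes id$ on the correct tensor legs of $L_K$ and $R_K$, and one must be sure the ``cancel the monomial'' step is legitimate — that is, that $\mathcal{O}(K_2)$ really is the Laurent polynomial algebra with $\{z^m\}$ a vector-space basis, which is exactly the $N=2$ case of the definition of $\mathcal{O}(K_N)$. For the $SL_q(N)$ analogue the same argument goes through verbatim with $\Z$ replaced by $\Z^{N-1}$ and the single monomial $z^m$ replaced by $z^\alpha=\prod_i z_i^{\alpha_i}$, using that $\{z^\alpha:\alpha\in\Z^{N-1}\}$ is a basis of $\mathcal{O}(K_N)$.
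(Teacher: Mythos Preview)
Your proof is correct and follows essentially the same route as the paper: apply $id\otimes h$ (resp.\ $h\otimes id$) to $L_K=(\phi\otimes id)\Delta$ (resp.\ $R_K$), commute $\phi$ past $h$ on the opposite tensor leg, and use Woronowicz's bi-invariance to obtain $z^m h(x)=h(x)=h(x)z^n$, forcing $h(x)=0$ unless $(m,n)=(0,0)$. Your write-up is in fact a bit more careful than the paper's, spelling out the final passage from homogeneous components to arbitrary $x$ via the direct-sum decomposition and noting the obvious extension to $\Z^{N-1}$.
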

\begin{proof}
Let $x\in\mathcal{A}[m,n].$  Then using the bi-invariance of $h$ and
$L_K,R_K$ one obtains $z^mh(x)=h(x)=h(x)z^n.$  More explicitly one
has
\begin{eqnarray*}
z^mh(x) = &(id\otimes h)(z^m\otimes x)\\ = &(id\otimes
h)(\phi\otimes id)\Delta(x)
\end{eqnarray*}
But $(id\otimes h)$ and $(\phi\otimes id)$ commute so that
\begin{eqnarray*}
z^mh(x) = & (id\otimes h)(\phi\otimes id)\Delta(x) \\
= & (\phi\otimes id)(id\otimes h)\Delta(x)\\
=& \phi(1)h(x) = h(x).
\end{eqnarray*}
One treats $h(x)z^n$ similarly.  Therefore $h(x)=0$ if
$(m,n)\neq(0,0).$
\end{proof}

Equipped with this information, this first obvious choices to find a nontrivial
measure are $ad$ and $bc.$  Moreover, in the case of $SU_q(2)$ one
has an algebra equipped with a $*$-product and finds that
\begin{equation}x\in\mathcal{A}[m,n] \iff x^*\in\mathcal{A}[-m,-n].\end{equation}
This information becomes more prevalent in the higher dimensional
cases. Another important piece of information to keep at bay is
\begin{equation}x\in\mathcal{A}[m,n] \iff
S(x)\in\mathcal{A}[-n,-m]\end{equation} from which one may easily
derive the relationship between $x$ and $x^*$.\\
\indent On $SU_q(2)$ the $*$-product yields $b^*=-qc.$  Therefore,
the first element examined here will be $-qbc=:\zeta.$  Utilizing
Woronowicz's equations, one finds
\begin{eqnarray}
h(\zeta) = &(id\otimes h)\circ\Delta(\zeta)\nonumber\\
=& (id\otimes h)(-q)(a\otimes b + b\otimes d)(c\otimes a+ d\otimes
c)\nonumber\\
=& -q(id\otimes h)(ac\otimes ba + ad\otimes bc + bc\otimes da +
bd\otimes dc)\nonumber\\
=&ad h(\zeta) + \zeta h(da)\nonumber\\
=& (1-\zeta)h(\zeta) + \zeta h(1-q^{-2}\zeta)\nonumber\\
&\implies h(\zeta) = \frac{1-q^{-2}}{1-q^{-4}}.
\end{eqnarray}

One important point to realize before going through further
computations is that many elements vanish under $h.$  It behooves
one to project from $\mathcal{O}(SL_q(2))$ to $\mathcal{A}[0,0]$
before beginning any computations.    Klimyk and Schmudgen have
provided a few horrendous formulae for the general reader in this
vein.  Letting $P$ be the aforemention projection; here they are:
\begin{eqnarray*}
(id\otimes
P)\circ\Delta(\zeta^n)=\sum_{i+j=n}\left[\begin{array}{c}n\\i
\end{array}\right]^2_{q^{-2}}q^{2ij}\zeta^j(\zeta;q^2)_i\otimes\zeta^i(q^{-2}\zeta;q^{-2})_j\\
h(\zeta^n)\cdot I=\sum_{i+j=n}\left[\begin{array}{c}n\\i
\end{array}\right]^2_{q^{-2}}q^{2ij}\zeta^j(\zeta;q^2)_ih(\zeta^i(q^{-2}\zeta;q^{-2})_j)\\
h(\zeta^n) = \frac{1-q^{-2n}}{1-q^{-2(n+1)}}h(\zeta^{n-1}).
\end{eqnarray*}

By noting in $SL_q(2)$ that $\mathcal{A}[0,0] = \C[\zeta]$
one now knows how to compute the Haar state within $SL_q(2).$
Moreover one knows how to compute $h(x^*x)$ and $h(xx^*)$ for any
$x\in SU_q(2).$  The present goal then shall be to compute a more
general Haar state on $SU_q(2)$ using matrix
corepresentations.\\

\indent Consider the two Hermitian forms on $\mathcal{O}(SU_q(2))$
given by
\begin{eqnarray}
\langle x,y\rangle_L = h(x^*y), & \langle x,y\rangle_R = h(xy^*), &
x,y\in\mathcal{O}(SU_q(2)).
\end{eqnarray}

Since one should desire scalar products to be sesquilinear, a choice
is necessary to determine which of these hermitian forms is an inner
product.  As presented here scalar products shall be linear in the
first variable, making $\langle \cdot,\cdot\rangle_L$ and
$\overline{\langle \cdot,\cdot\rangle}_R$ scalar products on the
vector space $\mathcal{O}(SU_q(2)).$
\begin{rem}
Certain special properties of $h$ and $\langle\cdot,\cdot\rangle$
necessitate comment.  The Haar state while linear, is not
central.  That is to say in general
\begin{equation}
h(xy)\neq h(yx).
\end{equation}
Therefore one should like to have a method of interpolating between
the two.  The preferred method is to look for an automorphism
$\vartheta$ such that
\begin{equation}
h(xy)=h(\vartheta(y)x).
\end{equation}
It is here that one gets a glimpse of why $N=2$ is so special.  It
this case one can solve $\vartheta$ directly on the generators and
find that
\begin{eqnarray*}
\vartheta(a) = q^2a, & \vartheta(b) = b\\
\vartheta(c) = c, & \vartheta(d) =q^{-2}d.\\
\end{eqnarray*}
What is remarkable is that
\begin{eqnarray}
\vartheta(x) = q^{m+n}x; & \forall x\in\mathcal{A}[m,n].
\end{eqnarray}
And in particular at $N=2$
\begin{equation}
\vartheta(t_{i,j}^{\ell}) = q^{-2(i+j)}t_{i,j}^{\ell}.
\end{equation}
 No such nicety is available for $N>2$ as
the indices $m,n$ are points in an integer lattice rather than
integers themselves. This
problem will be resolved later.\\
\indent Two further remarks from definitions;
\begin{enumerate}
\item[(a)] $\langle xz,y\rangle_R = \langle x,yz^*\rangle_R$ and
similarly $\langle zx,y\rangle_L = \langle x,z^*y\rangle_L$\\
\item[(b)] $\langle x,y\rangle_L = \langle \vartheta(y),x\rangle_R.$
\end{enumerate}
\end{rem}

\begin{thm}
\begin{enumerate}
\item [(i)]The decomposition of $\mathcal{O}(SU_q(2))$ into matrix
corepresentations is an orthogonal decomposition under
$\langle\cdot,\cdot\rangle_L$ and $\langle\cdot,\cdot\rangle_R$\\
\item [(ii)]The matrix corpresentations yield the following formulae
for $h.$
\begin{eqnarray}
\langle t_{i,j}^{\ell},t_{i,j}^{\ell}\rangle_L =
\frac{q^{-2i}}{[2\ell+1]_q}\\
\langle t_{i,j}^{\ell},t_{i,j}^{\ell}\rangle_R =
\frac{q^{2j}}{[2\ell+1]_q}
\end{eqnarray}
\end{enumerate}
\end{thm}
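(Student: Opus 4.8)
The plan is to establish (i) and (ii) in tandem, exploiting the grading $\mathcal{O}(SU_q(2))=\bigoplus_{m,n}\mathcal{A}[m,n]$, the multiplicativity $\mathcal{A}[m,n]\cdot\mathcal{A}[r,s]\subset\mathcal{A}[m+r,n+s]$ of the previous section, the fact that $h$ annihilates everything outside $\mathcal{A}[0,0]$, and the identity from the Remark that $t_{i,j}^{\ell}\in\mathcal{A}[-2i,-2j]$.

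For the orthogonality in (i), write $\langle t_{i,j}^{\ell},t_{i',j'}^{\ell'}\rangle_L=h\big((t_{i,j}^{\ell})^{*}t_{i',j'}^{\ell'}\big)$. Since $x\in\mathcal{A}[m,n]\iff x^{*}\in\mathcal{A}[-m,-n]$, the product $(t_{i,j}^{\ell})^{*}t_{i',j'}^{\ell'}$ lies in $\mathcal{A}\big[2(i-i'),\,2(j-j')\big]$, which $h$ kills unless $i=i'$ and $j=j'$. Thus all matrix elements with distinct lower indices are already orthogonal, and the only remaining case is $\ell\neq\ell'$ with $i=i'$, $j=j'$. For that case I would invoke Woronowicz's orthogonality relations for the pairwise inequivalent irreducible corepresentations $\{t^{\ell}\}_{\ell}$ of $SU_q(2)$; alternatively, one can keep this self-contained by substituting the description of $t_{i,j}^{\ell}$ as a little $q$-Jacobi polynomial in $\zeta$ and using the orthogonality of those polynomials with respect to the functional $h$ on $\C[\zeta]$ determined by $h(\zeta^{n})=\tfrac{1-q^{-2}}{1-q^{-2(n+1)}}$. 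The argument for $\langle\cdot,\cdot\rangle_R$ is identical, with $h\big(t_{i,j}^{\ell}(t_{i',j'}^{\ell'})^{*}\big)$ in place of $h\big((t_{i,j}^{\ell})^{*}t_{i',j'}^{\ell'}\big)$.

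For (ii), first note by Remark (b) that $\langle x,y\rangle_L=\langle\vartheta(y),x\rangle_R$ and that $\vartheta$ acts on $\mathcal{A}[m,n]$ as multiplication by $q^{m+n}$, so $\langle t_{i,j}^{\ell},t_{i,j}^{\ell}\rangle_R=q^{2(i+j)}\langle t_{i,j}^{\ell},t_{i,j}^{\ell}\rangle_L$; the two formulae are therefore equivalent and it suffices to prove the one for $\langle\cdot,\cdot\rangle_L$. Since the corepresentation matrix $t^{\ell}$ is unitary, $\sum_{i}(t_{i,j}^{\ell})^{*}t_{i,j}^{\ell}=1$, so applying $h$ gives $\sum_{i=-\ell}^{\ell}\langle t_{i,j}^{\ell},t_{i,j}^{\ell}\rangle_L=1$ for every $j$; as $\sum_{i=-\ell}^{\ell}q^{-2i}=[2\ell+1]_q$, the claim reduces to showing that $\langle t_{i,j}^{\ell},t_{i,j}^{\ell}\rangle_L$ is independent of $j$ and proportional to $q^{-2i}$. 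To pin this down I would induct on $\ell$: the base case $\ell=0$ is $\langle 1,1\rangle_L=h(1)=1=q^{0}/[1]_q$, and the level $\ell=\tfrac12$ values are read off from $h(a^{*}a)=h(1-q^{-2}\zeta)$ and its siblings. For the inductive step, use the Clebsch--Gordan decomposition $t^{\ell}\otimes t^{1/2}\cong t^{\ell+1/2}\oplus t^{\ell-1/2}$: express $t_{i,j}^{\ell+1/2}$ through the $q$-Clebsch--Gordan coefficients in terms of products $t_{i',j'}^{\ell}t_{i'',j''}^{1/2}$ and compute $h\big((t_{i,j}^{\ell+1/2})^{*}t_{i,j}^{\ell+1/2}\big)$ --- part (i) forces the Haar pairings of these product-basis elements to be block-diagonal with respect to the decomposition, which is what collapses the sum onto a manageable expression in the level-$\ell$ and level-$\tfrac12$ data. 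Alternatively one may bypass Clebsch--Gordan coefficients: compute a single extreme entry directly, using that $t_{-\ell,-\ell}^{\ell}$ is a scalar multiple of $a^{2\ell}$ and that $a^{*\,2\ell}a^{2\ell}$ reorders in $\C[\zeta]$ to a product of the form $\prod_{k}(1-q^{-2k}\zeta)$, whose Haar value follows by expanding and applying $h(\zeta^{n})=\tfrac{1-q^{-2}}{1-q^{-2(n+1)}}$; one then propagates to the remaining $(i,j)$ via the corepresentation identity $\Delta(t_{i,j}^{\ell})=\sum_{k}t_{i,k}^{\ell}\otimes t_{k,j}^{\ell}$ together with the bi-invariance of $h$.

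The main obstacle is exactly this last step: carrying out the $q$-combinatorial bookkeeping --- whether through the little $q$-Jacobi polynomials and $q$-Clebsch--Gordan coefficients or through the propagation from the extreme entry --- precisely enough to see that the $j$-dependence cancels on the left (and the $i$-dependence on the right) and that what survives is exactly $q^{-2i}/[2\ell+1]_q$, rather than an expression that only telescopes to it after further manipulation. A secondary point requiring care is keeping the $\ell\neq\ell'$ half of (i) self-contained within the paper's framework --- that is, deducing it from the orthogonality of little $q$-Jacobi polynomials under $h$ --- if one prefers not to quote the general Woronowicz orthogonality relations.
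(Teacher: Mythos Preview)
Your treatment of the case $(i,j)\neq(i',j')$ via the bigrading matches the paper. For $\ell\neq\ell'$, and more importantly for part (ii), the paper takes a shorter and more uniform route that dissolves precisely what you flag as the ``main obstacle.''

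The paper handles both at once with a Schur-lemma argument. For any scalar matrix $M$ set $\tilde M:=h(T^{\ell}MT^{k*})$ and $\tilde M':=h(T^{\ell*}MT^{k})$, Haar applied entrywise. The corepresentation identity together with bi-invariance of $h$ gives $T^{\ell}\tilde M=\tilde M\,T^{k}$, so $\tilde M$ (and likewise $\tilde M'$) intertwines irreducibles: it vanishes for $\ell\neq k$, which finishes (i) without appealing to Woronowicz or to little $q$-Jacobi orthogonality, and is a scalar multiple of $I$ for $\ell=k$. Unwinding the latter with $M$ a matrix unit says exactly that $\langle t_{i,j}^{\ell},t_{i,j}^{\ell}\rangle_R=:\alpha_j$ depends only on $j$ and $\langle t_{i,j}^{\ell},t_{i,j}^{\ell}\rangle_L=:\alpha'_i$ only on $i$ --- the $j$-independence you were going to earn by Clebsch--Gordan induction or by propagation from an extreme entry comes for free. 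Your $\vartheta$-relation then reads $\alpha'_i=q^{-2(i+j)}\alpha_j$; since the left side is independent of $j$ and the right of $i$, there is a single constant $\alpha=q^{2i}\alpha'_i=q^{-2j}\alpha_j$. One pins $\alpha$ down from a single corner: since $b^{*}b=bb^{*}=\zeta$, the norm of the corner matrix element equals $h(\zeta^{2\ell})$, already computed recursively to be $(1-q^{-2})/(1-q^{-4\ell-2})$, and the stated formulae follow. So your plan is viable, but the intertwiner argument simultaneously replaces the external orthogonality input in (i) and the entire inductive/propagation machinery in (ii); your unitarity sum $\sum_i\langle t_{i,j}^{\ell},t_{i,j}^{\ell}\rangle_L=1$ is correct but no longer needed once $\alpha$ is fixed by the corner value.
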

\begin{proof} (cf. [KS])
For part $(i)$ it has already been established that $\langle
t_{i,j}^{\ell},t_{r,s}^k\rangle = 0$ if $(i,j)\neq(r,s).$  What is
left to establish is orthogonality when $\ell\neq k.$  This argument
reduces to Schur's lemma for Hopf algebras.\\
\indent Consider a $(2\ell+1)\times(2k+1)$ matrix $M.$  Define
$\tilde M := h(T^{\ell}MT^{k*})$ and $\tilde M' := h(T^{\ell
*}MT^{k}).$ Then $\tilde M=0$ and $\tilde M'=0$ when $\ell\neq k.$
This assertion is shown by again considering the invariance
properties of $h.$
\begin{eqnarray*}
T^{\ell}\tilde M T^{k*} &=& (id\otimes h)((T^{\ell}\otimes
I)(I\otimes T^{\ell}) M (I\otimes T^{k*})(T^{k*}\otimes I))\\
&=&((id\otimes h)\circ\Delta)(T^{\ell}MT^{k*})\\
&=&h(T^{\ell}MT^{k*})=\tilde M
\end{eqnarray*}
Thus one obtains $$T^{\ell}\tilde M = \tilde M T^{k}.$$  That is to
say that $\tilde M$ intertwines irreducible corepresentations.  By
Schur's lemma, the only invariant subspaces are empty or the whole
space.  Hence when $\ell\neq k$ $\tilde M = 0.$  The same argument
shows this for $\tilde M'.$ Schur's lemma gives even more
information however.  Not only is the invariant subspace for $\tilde
M$ the whole space, but $\tilde M$ and $\tilde M'$ take the special
forms 
\begin{eqnarray*} 
\tilde M = \alpha I, & \tilde M' = \alpha' I
& \alpha,\alpha'\in\C.
\end{eqnarray*}\\

The quantities one now seeks are $\langle
t_{i,j}^{\ell},t_{i,j}^{\ell}\rangle_L = \alpha'_i$ and $\langle
t_{i,j}^{\ell},t_{i,j}^{\ell}\rangle_R = \alpha_j.$  But one already has a
relation between these two numbers in the guise of $$\langle
t_{i,j}^{\ell},t_{i,j}^{\ell}\rangle_L =
\langle\vartheta(t_{i,j}^{\ell}),t_{i,j}^{\ell}\rangle_R.$$ Hence
\begin{equation}
\alpha'_i = q^{-2(i+j)}\alpha_j.
\end{equation}
Moreover there exists $\alpha$ so that $\alpha =
q^{2i}\alpha_i'=q^{-2j}\alpha_j$ for all $i,j.$  However, from the
computation above
\begin{eqnarray}
h(\zeta^{2\ell}) &=& h((b^*b)^{2\ell})\nonumber\\
&=& \langle t_{i,-\ell}^{\ell},t_{i,-\ell}^{\ell}\rangle
\nonumber\\
&=& \alpha_{-\ell} = \frac{q^{-4\ell}(1-q^{-2})}{1-q^{-4\ell-2}}
\end{eqnarray}
Therefore, one obtains
$$\alpha = \frac{q^{-2\ell}(1-q^{-2})}{1-q^{-4\ell-2}}$$
and
\begin{eqnarray}
\alpha_j = \frac{q^{2j}}{[2\ell+1]_q},\nonumber\\
\alpha'_i = \frac{q^{-2i}}{[2\ell+1]_q}
\end{eqnarray}
\end{proof}

\section{Generalizing to $SU_q(N)$}
One of the many conveniences ascribed to the case $N=2$ is the fact
that the automorphism $\vartheta$ may be written
$\vartheta(x)=q^{m+n}x$ when $x\in\mathcal{A}[m,n].$  Perhaps one of
the first steps in generalizing to the $N>2$ case should be to
produce a similar automorphism that accounts for the noncommutative
property of $h.$  One should like to have
\begin{eqnarray}
h(xy)=h(\vartheta(y)x) & \forall x,y\in\mathcal{O}(SU_q(N)).
\end{eqnarray}

The first issue encountered here is that $\mathcal{O}(SU_q(N))$
cannot be reduced to $N$ generators as in the case $N=2.$  In fact,
since the $*$-structure in $SU_q(N)$ involves quantum determinants
of cofactors $\mathcal{O}(SU_q(N))$ properly has $N^2$ generators.
With this in mind, the desired automorphism $\vartheta$ requires
$n^2$ parameters to be fully determined.  There are only a handful
of properties that one can guarantee of $\vartheta$, namely
\begin{enumerate}
\item If $\vartheta(x) = \beta x$ then $\vartheta(x^*) = \beta^{-1}x^*.$
This insures that the determinant relations hold on $\mathcal{O}(SU_q(N))$\\
\item When $x^*x=xx^*$ then $\vartheta(x)=x.$  Specifically this
happens at $x= t_{-\ell,\ell}^{\ell}$ and $x=t_{\ell,-\ell}^{\ell}.$
Note that when $N\neq 2$ then $\ell$ does not increment by $1/2$,
but rather by $\left(\begin{array}{c}N+k-1\\N-1\end{array}\right)$
halves at the $k$th step.
\end{enumerate}

The form $\vartheta(t_{i,j}^{\ell}) = q^{-2(i+j)}t_{i,j}^{\ell}$
from $\mathcal{O}(SU_q(2))$ fortunately yields an acceptable
automorphism in the higher cases.  What one needs to check in this
case is that this particular automorphism coincides with commutation
relations on $SU_q(N).$\\
\begin{ex}
Consider the following necessities of $h$ and their correlations
with relations on $SU_q(N).$

\begin{eqnarray}
\sum_{j=1}^N u_{1,j}u_{1,j}^* =1, && \sum_{j=1}^N
q^{-2(j-1)}u_{1,j}^*u_{1,j} = 1,\\
\sum_{i=1}^N u_{i,1}^*u_{i,1}, && \sum_{i=1}^N
q^{2(i-1)}u_{i,1}u_{i,1}^*,\nonumber\\
h(\sum_{j=1}^N u_{1,j}u_{1,j}^*) &=& \sum_{j=1}^N
h(u_{1,j}u_{1,j}^*)
=1,\nonumber\\
h(\sum_{j=1}^N q^{-2(j-1)}u_{1,j}^*u_{1,j})& =& \sum_{j=1}^N
q^{-2(j-1)}h(u_{1,j}^*u_{1,j})=1.\nonumber
\end{eqnarray}
This seems to suggest that $h$ varies directly with the sub-indices
of the generators.  Fortunately this is the case when $N=2$. Another
important clue derived from these equations is that when using the
left or right invariance of $h$ the coproducts will yield unsightly
equations involving scalars hitting elements of the algebra which
have specific relations.  For example when trying to compute
$h(u_{1,N}u_{1,N}^*)$ one arrives at
\begin{equation}
h(u_{1,N}u_{1,N}^*)\cdot I = \sum_{j=1}^N u_{1,j}u_{1,j}^*
h(u_{j,N}u_{j,N}^*)
\end{equation}
Clearly it is the case that $h(u_{1,N}u_{1,N}^*)\neq 0$ so one must
account for the fact that $\sum_{j}u_{1,j}u_{1,j}^*=1.$  What one 	67.03
must conclude is that $h(u_{1,N}u_{1,N}^*) = h(u_{j,N}u_{j,N}^*)$
for every $j\in\{1,\dots,N\}.$
\end{ex}

In a similar way, one can play all the tricks in computing relations
between $h(u_{i,j}u_{i,j}^*)$ and $h(u_{i,j}^*u_{i,j}).$  The
relations can be listed as follows:\\

\begin{enumerate}
\item $h(u_{i,j}u_{i,j}^*) = \langle u_{i,j},u_{i,j}\rangle_R$  is constant in $j$\\
\item $h(u_{i,j}^*u_{i,j}) = \langle u_{i,j},u_{i,j}\rangle_L$ is constant in $i$\\
\end{enumerate}

It is now convenient to move into computations with matrix
corepresentations.  Here one
should like to have the automorphism $\vartheta$ in hand.  Then one
needs to check $\vartheta(t_{i,j}^{\ell})= q^{k}t^{\ell}_{i,j}$
against the given relations on $h(u_{i,j}u_{i,j}^*).$  One will see
after a short computation
\begin{equation}
\vartheta(t_{i,j}^{\ell}) = q^{-2(i+j)}t_{i,j}^{\ell}.
\end{equation}

This is exactly the form of $\vartheta$ from $N=2.$ Then using the
invariance of $h$ one finds
\begin{equation}
h(t_{i,\ell}^{\ell} t_{i,\ell}^{\ell*}) = \sum_{k=-\ell}^{\ell}
h(t_{i,k}^{\ell}t_{i,k}^{\ell*})t_{k,\ell}^{\ell}t_{k,\ell}^{\ell*}
%= h(t_{i,\ell}^{\ell}t_{i,\ell}^{\ell*})\sum_{k=-\ell}^{\ell}q^{2k}
\end{equation}

\begin{ex} Looking at a quick example for $SU_q(4)$ one has
\[
u_{11}^*u_{11} + q^{-2}u_{12}^*u_{12} +q^{-4}u_{13}^*u_{13} +q^{-6}u_{14}^*u_{14} = 1. 
\]

Applying $h$ to both sides one and recognizing $h(u_{1j}^*u_{1j})=h(u_{11}^*u_{11})$ for all $j$ one arrives at
\begin{eqnarray}
&h(u_{1j}^*u_{1j})(1+q^{-2}+q^{-4}+q^{-6}) =& 1\\
&h(u_{1j}^*u_{1j}) =& \frac{1}{q^{-3}(q^3+q+q^{-1}+q^{-3})}\nonumber\\
&h(u_{1j}^*u_{1j}) =& \frac{q^3}{[4]_q}.\nonumber
\end{eqnarray}

Noting in  $SU_q(4)$ that $u_{1j}= t_{-3/2,j-5/2}^{3/2}$ one arrives at 
\begin{equation}
\langle t_{-3/2,\tilde j}^{3/2},t_{-3/2,\tilde j}^{3/2} \rangle_L = \frac{q^{2(3/2)}}{[2(3/2)+1]_q} = \frac{q^{-2i}}{[2\ell+1]_q}.
\end{equation}

\end{ex}

Putting all the steps together one needs to use $\vartheta$, the
respective constancy conditions in $i$ and $j$, the quantum
determinant relations on $\mathcal{O}(SU_q(N))$, and the bivariance
of $h$ to arrive at
\begin{eqnarray}
\langle t_{i,j}^{\ell},t_{i,j}^{\ell}\rangle_R =
\frac{q^{2j}}{\sum_{k=-\ell}^{\ell}q^{2k}} =
\frac{q^{2j}}{[2\ell+1]_q}\\
\langle t_{i,j}^{\ell},t_{i,j}^{\ell}\rangle_L =
\frac{q^{-2i}}{\sum_{k=-\ell}^{\ell}q^{2k}} =
\frac{q^{-2i}}{[2\ell+1]_q}.\\
\end{eqnarray}

These are the desired formulae for $h$ in $\mathcal{O}(SU_q(N))$ for
any $N.$  The difference in the higher dimensional cases is simply
the indexing on $\ell.$  Rather elementary combinatorics come into
play to aid one in the discovery that successive representations of
$SU_q(N)$ need not exist for each half integer.

\section{Concluding Remarks}

While the Haar State has been studied by several authors, the succinctness of the presentation at hand is new.  The advantage in reorganizing matrix corepresentations to depend on a single parameter yields a result that looks identical to the case which is explicitly computable when $N=2$.  One interesting consequence of the combinatorial re-indexing is a conjecture concerning possible spin states in higher dimensions.

\begin{Qu}
Are possible spin states of (theoretical) particles in dimensions with $SU(N)$ symmetries restricted to taking values in
\[
\{\frac{1}{2}(\left(\begin{array}{c}N+k-1\\k\end{array}\right)-1) \}?
\]
\end{Qu}

This is a question the author hopes to explore soon.

The work which remains is to extend the methods developed here to generalize the Dirac operator from $SU_q(2)$ to $SU_q(N)$ hopefully in the style of [DLSvSV].  The first step has been achieved and one may write an orthnormal basis for $SU_q(N)$ in the form 
\[
|\ell mn\rangle = \frac{1}{\sqrt{h(t^{\ell*}_{ij}t^{\ell}_{ij})}}t^{\ell_{ij}} = q^{i}[2\ell+1]_q^{1/2}t^{\ell}_{ij}
\]
so that
\[
\langle \ell'm'n'|\ell mn\rangle = \delta_{\ell'\ell}\delta_{m'm}\delta_{n'n}.
\]

\end{document}